\newtheorem{theorem}{Theorem}[section]
\newtheorem{lemma}{Lemma}[section]
\newtheorem{claim}{claim}
\title{A remark for characterizing blowup introduced by Giga and Kohn}
\author{Wangzhe Wu}
\address{Institute of Mathematics \\Academy of Mathematics and Systems Science, Chinese
Academy of Sciences, Beijing, 100190, China}
\email{wuwz18@mail.ustc.edu.cn}
\begin{document}

\begin{abstract}
	Giga and Kohn studied the blowup solutions for the equation $v_{t} - \Delta v - |v|^{p - 1} v = 0 $ and characterized the asymptotic 
behavior of $v$ near a singularity. In the proof, they reduced the problem to a Liouville theorem for the equation $\Delta u - \frac{1}{2} x \cdot \nabla u + |u|^{p - 1} u - \beta u = 0$ where $\beta = \frac{1}{p - 1}$ and $|u|$ is bounded. This article is a remark for their work and we will show when $u \geq 0$, the boundedness condition for $|u|$ can be removed. 
	\end{abstract}

	\pagestyle{fancy}

\fancyhead{}
\fancyhead[CO]{A remark for characterizing blowup introduced by Giga and Kohn}

	\maketitle

	\setlength{\headheight}{15pt}

	\section{introduction}
	
	Define 
	\begin{equation}
		p_* = \left\{ \begin{array}{l}
			\infty , \quad\text{if } 1 \leq n \leq 2\\
			\frac{n + 2}{n - 2}, \quad\text{if } n \geq 3 .
		\end{array}
		\right .
	\end{equation}
	Consider the equation:
	
	\begin{equation}\label{0929ori1}
		\begin{aligned}
			&v_{t} - \Delta v - |v|^{p - 1} v = 0 \text{ in $D \times (0, T)$},\\
		&v = 0	\text{ on $\partial D$},
		\end{aligned}
	\end{equation}
	where $1 < p < p_*$ and $n \geq 1$ . And $T$ is the maximal existence time of the $L^\infty$ solution $v$. Then $v$ will blow up when $t$ tends to $T$. Giga and Kohn in \cite{MR876989} got that when $D$ is a bounded convex domain or $\mathbb R^n$ and $u \geq 0$, then 
	\begin{equation}\label{0929con1}
		\sup_{D \times (0, T)} v(x, t) \cdot (T - t)^{\frac{1}{p - 1}} < \infty.
	\end{equation}
	In the proof, their fundamental tool is the change of both dependent and independent variables defined by
	\begin{equation}
		w_{a}(y, s) := (T - t)^{\beta} v(a + y \sqrt{T - t}, t), \quad s = -\log (T - t).
	\end{equation}
	Then one computes that $w = w_{a}$ solves a new parabolic equation in $(y, s)$ from \eqref{0929ori1}:
	\begin{equation}
		w_s - \Delta w + \frac{1}{2}y\cdot \nabla w + \beta w - |w|^{p - 1}w = 0,
	\end{equation}
	and the blowup time $T$ corresponds to $s = \infty$.
	Furthermore, if the blowup rate \eqref{0929con1} holds (under this assumption, the condition $u \geq 0$ is not needed), in \cite{MR784476} they got a finer description of $u$ near blowup:
	\begin{equation}
		\lim_{s \rightarrow \infty} w(y, s) = 0  \text{ or $\pm k$},
	\end{equation}
	uniformly for $|y| < C$ and $k = (p - 1)^{-\frac{1}{p - 1}}$. In this proof, they found the monotonicity of the energy functional
	\begin{equation}
		E(w) = \int_{\mathbb R^n} \left[ \frac{1}{2}|\nabla w|^2 + \frac{1}{2(p - 1)}w^2 - \frac{1}{p + 1}|w|^{p + 1}\right] e^{-\frac{1}{4}|y|^2} dy.
	\end{equation}
	Then they deduced that when $s$ tends to $\infty$, $w_s$ will tend to zero. Furthermore,  Kelei Wang, Juncheng Wei and Ke Wu \cite{Keleiwang} proved that when $p > \frac{n + 2}{n - 2}$ and $n \geq 3$, not only the constant solution has the lowest energy $E(w)$, but there is a gap to the second lowest. Anyway when $u$ is stationary, the problem in \cite{MR784476} is reduced to one Liouville theorem for the following  equation under the assumption that $|u|$ is bounded:
	\begin{equation}\label{8.9equ1}
		\Delta u - \frac{1}{2} x \cdot \nabla u + |u|^{p - 1}u - \beta u = 0,  \quad x \in \mathbb R^n,
	\end{equation}
	with $\beta = \frac{1}{p - 1}$. 
	\begin{theorem}[Proposition 2 in \cite{ MR784476} ]\label{0408thm}
		Suppose $1 < p \leq p_*$ and $n\geq 1$ . If $u$ is a classical solution of \eqref{8.9equ1} and $|u|$ is bounded, then $|\nabla u| \equiv 0$.
	\end{theorem}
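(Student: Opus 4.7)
The plan is to adapt the classical Pohozaev rigidity argument to the Gaussian weighted setting. Observe that \eqref{8.9equ1} is the Euler--Lagrange equation of
$$E(u) = \int_{\mathbb R^n}\Big[\tfrac{1}{2}|\nabla u|^2 + \tfrac{\beta}{2}u^2 - \tfrac{1}{p+1}|u|^{p+1}\Big]\rho\,dx,\qquad \rho(x)=e^{-|x|^2/4},$$
so two natural test functions present themselves: $u$ itself, yielding the energy identity, and the dilation generator $x\cdot\nabla u$, yielding a Pohozaev-type identity. The boundedness hypothesis $\|u\|_\infty<\infty$, together with interior Schauder estimates applied to \eqref{8.9equ1}, gives polynomial growth bounds on $|\nabla u|$ and $|D^2 u|$, which are absorbed by the Gaussian decay of $\rho$. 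Hence every integral appearing below converges absolutely and no boundary contribution survives at infinity.

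Multiplying \eqref{8.9equ1} by $u\rho$ and integrating, the cross term $\tfrac{1}{2}\int u(x\cdot\nabla u)\rho$ produced by $\nabla\rho=-\tfrac{x}{2}\rho$ when integrating $\int\Delta u\cdot u\rho$ by parts cancels exactly the drift contribution, leaving
$$\int|\nabla u|^2\rho\, dx + \beta\int u^2\rho\, dx = \int|u|^{p+1}\rho\, dx. \qquad (\mathrm{I})$$
Multiplying instead by $(x\cdot\nabla u)\rho$, I use $\nabla u\cdot\nabla(x\cdot\nabla u)=|\nabla u|^2+\tfrac{1}{2}x\cdot\nabla|\nabla u|^2$ to handle the Laplacian and $\operatorname{div}(x\rho)=(n-\tfrac{|x|^2}{2})\rho$ to convert each $x\cdot\nabla$ expression, after which the $(x\cdot\nabla u)^2$ contributions from the Laplacian and drift cancel. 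This produces a weighted Pohozaev identity (II) involving the three integrals of (I) as well as the weighted quantities $\int|x|^2|\nabla u|^2\rho$, $\int|x|^2 u^2\rho$, and $\int|x|^2|u|^{p+1}\rho$.

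The final step is a combination argument. An auxiliary third identity, obtained by testing against $|x|^2 u\rho$, allows elimination of $\int|x|^2|u|^{p+1}\rho$. After substituting (I) and this auxiliary identity into (II), I expect the calibration $\beta(p-1)=1$ to force the coefficients of $\int u^2\rho$ and $\int|x|^2 u^2\rho$ to vanish identically; what remains is a linear relation between $\int|\nabla u|^2\rho$ and $\int|x|^2|\nabla u|^2\rho$ whose sign is controlled by the Sobolev exponent condition $p\leq p_*$. That constraint supplies the crucial sign, forcing $\int|\nabla u|^2\rho = 0$ and hence $\nabla u\equiv 0$.

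The main obstacle is the combinatorial bookkeeping: because the Gaussian weight breaks the scaling invariance of the pure nonlinear problem $\Delta u + |u|^{p-1}u = 0$, the Pohozaev test generates non-scale-invariant remainder terms that proliferate. One must check that, under the specific calibration $\beta = 1/(p-1)$ built into \eqref{8.9equ1}, these remainders conspire to a sign-definite combination precisely when $p \leq p_*$; this is where the critical Sobolev exponent enters the rigidity.
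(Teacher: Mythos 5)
Your proposal is correct and follows essentially the same route as the paper (and Giga--Kohn's original argument): test against $u\rho$, $|x|^2u\rho$, and $(x\cdot\nabla u)\rho$, using boundedness of $u$ plus interior elliptic estimates to get polynomially growing derivative bounds that the Gaussian weight absorbs, then take a linear combination of the three identities. The cancellation you anticipate does occur: with $\beta=\tfrac{1}{p-1}$ the combination $\tfrac{n}{p+1}(\mathrm{I})-\tfrac{1}{2(p+1)}(\mathrm{aux})+(\mathrm{II})$ eliminates every $u^2$, $|x|^2u^2$, $|u|^{p+1}$, and $|x|^2|u|^{p+1}$ term, leaving $\tfrac{n+2-(n-2)p}{2p+2}\int|\nabla u|^2\rho+\tfrac{p-1}{4(p+1)}\int|x|^2|\nabla u|^2\rho=0$, which forces $\nabla u\equiv 0$ for all $1<p\leq p_*$ (at $p=p_*$ the second, strictly positive, coefficient still does the job).
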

	The proof of Theorem \ref{0408thm} is based a Pohozaev's identity: multiply \eqref{8.9equ1} by $\nabla u \cdot x$ and use integration by parts. This method is widely used to get Liouville theorems. Using the Pohozaev's identity, Souplet \cite{SOUPLET20091409} solved the Lane-Emden conjecture for dimension $n \leq 4$. Besides under the assumption
	\begin{equation}\label{0409equ1}
		\int_{\mathbb R^n} |\nabla u|^2 + u^{p + 1} < \infty,
	\end{equation}
	  Byeon, Ikoma, Malchiodi and Mari \cite{byeon2024compactnessmonotonicitynonsmoothcritical} used the Pohozaev's identity to get the Liouville theorem for the equation 
	\begin{equation}
		div\left( \frac{\nabla u}{\sqrt{1 - |\nabla u|^2}} \right) + u^{p - 1}u = 0.
	\end{equation}
	However this method always needs a global estimate (such as \eqref{0409equ1} ). This is also the reason why Souplet only dealt with the case $n \leq 4$. Similarly in \cite{ MR784476}, Giga and Kohn used the boundedness of $|u|$ to get the desired global estimate.	
	 This article is a remark for Theorem \ref{0408thm} and we will show that the boundedness condition for $|u|$ can be replaced by the condition that $u\geq 0$.

	\begin{theorem}\label{0928thm}
		Suppose $1 < p < p_*$ and $n\geq 1$ . If $u \geq 0$ is a classical solution of \eqref{8.9equ1}, then $|\nabla u| \equiv 0$.
	\end{theorem}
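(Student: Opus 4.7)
The plan is to exploit the nonnegativity of $u$ to establish, a priori, that all the weighted integrals appearing in Giga and Kohn's Pohozaev-type proof of Theorem \ref{0408thm} are finite; then either a bootstrap yields $u\in L^\infty$ and Theorem \ref{0408thm} applies directly, or the Pohozaev argument of \cite{MR784476} can be carried out in our setting with only cosmetic modifications. Nonnegativity will enter only once, via Jensen's inequality applied to an integrated form of \eqref{8.9equ1}. Concretely, write $\rho(y) := e^{-|y|^2/4}$ and pick a smooth radial cutoff $\chi_R(y) := \phi(|y|/R)$ with $\phi \equiv 1$ on $[0,1]$ and $\phi \equiv 0$ on $[2,\infty)$. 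Multiplying \eqref{8.9equ1} by $\chi_R \rho$ and using self-adjointness of the Ornstein--Uhlenbeck operator $L := \Delta - \tfrac12 y \cdot \nabla$ with respect to $\rho\, dy$, one obtains the identity
\begin{equation*}
\int u^p\, \chi_R\, \rho\, dy \;=\; \beta \int u\, \chi_R\, \rho\, dy \;-\; \int u\, L\chi_R\, \rho\, dy,
\end{equation*}
in which $L\chi_R$ is uniformly bounded in $R$ and supported on the annulus $\{R\le|y|\le 2R\}$.

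Next I would divide this identity by $J_R := \int \chi_R\rho\, dy$, set $a_R := \int u\, d\nu_R$ with $d\nu_R := J_R^{-1}\chi_R \rho\, dy$, and apply Jensen's inequality (valid thanks to $u\ge 0$ and the convexity of $t\mapsto t^p$) to obtain
\begin{equation*}
a_R^p \;\le\; \int u^p\, d\nu_R \;=\; \beta a_R \;-\; \frac{1}{J_R}\int u\, L\chi_R\, \rho\, dy.
\end{equation*}
The error term on the right is controlled by the mass of $u\rho$ on the annulus, which by the Gaussian decay of $\rho$ together with classical interior estimates for \eqref{8.9equ1} on balls of fixed radius can be shown to be $o(a_R)$ as $R\to\infty$. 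Letting $R\to\infty$ (using monotone convergence, again by $u\ge 0$) and taking $(p-1)$-th roots then gives $\int u\rho\, dy \le k \int \rho\, dy < \infty$, with $k = (p-1)^{-1/(p-1)}$, and from the identity above $\int u^p\rho\, dy < \infty$ follows immediately.

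From the resulting weighted $L^1$ and $L^p$ bounds, I would bootstrap by testing \eqref{8.9equ1} against $u^{q-1}\chi_R^2\rho$ for increasing $q$ and absorbing the cutoff cross-terms via Young's inequality, eventually obtaining $u\in L^q(\mathbb R^n, \rho\, dy)$ for every finite $q$; combined with weighted elliptic regularity for $L$ this yields $u \in L^\infty(\mathbb R^n)$, and Theorem \ref{0408thm} then gives $|\nabla u|\equiv 0$. The hard part is the error estimate in the Jensen step: without any a priori growth control on $u$, one must leverage the exponential Gaussian weight together with fixed-scale interior regularity to show that the annulus contribution is indeed negligible compared with $a_R$ in the limit. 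Without the nonnegativity hypothesis, Jensen's inequality would be unavailable and the integrated identity could not be turned into an absolute bound on $\int u\rho\, dy$, so the whole strategy would collapse.
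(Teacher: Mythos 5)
Your first step---testing \eqref{8.9equ1} against $\chi_R\rho$, invoking self-adjointness of $L$, and using Jensen to bound $\int u\rho$ and hence $\int u^p\rho$---is plausible and genuinely different from the paper's route; even the ``hard part'' you flag can be handled, since a standard nonlinear-capacity estimate (test the equation against a large power of a unit-scale cutoff centered at $y_0$) gives $\int_{B_1(y_0)}u^p\lesssim(1+|y_0|)^{p/(p-1)}$, so the annulus contribution $\int_{\{R\le|y|\le 2R\}}u\,|L\chi_R|\,\rho$ decays like $R^{N}e^{-R^2/4}$. The genuine gap is the bootstrap that follows. Testing \eqref{8.9equ1} against $u^{q-1}\chi_R^2\rho$ yields, after the cutoff terms are removed,
\begin{equation*}
\int u^{p+q-1}\rho \;=\;(q-1)\int u^{q-2}|\nabla u|^2\rho\;+\;\beta\int u^{q}\rho,
\end{equation*}
in which the quantity you want to control sits on the side that is bounded \emph{below} by what you already know: the identity propagates integrability downward, not upward, and the gradient term cannot be absorbed without already knowing the left-hand side is finite. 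To go upward you would need a Sobolev inequality for the measure $\rho\,dy$, and the Gaussian measure satisfies only a logarithmic Sobolev inequality ($H^1(\rho)\hookrightarrow L^2\log L(\rho)$, not $L^{2+\varepsilon}(\rho)$), so no Moser-type iteration closes. Moreover, even $u\in L^q(\rho)$ for every finite $q$ would not give $u\in L^\infty$: the Ornstein--Uhlenbeck operator is hypercontractive but not ultracontractive, and $u(y)=|y|$ lies in every $L^q(\rho)$. So neither exit you propose---reducing to Theorem \ref{0408thm} via $u\in L^\infty$, or running the Pohozaev identity directly---is reachable from weighted $L^1$ and $L^p$ bounds alone.

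Note also that your argument nowhere uses $p<p_*$, whereas the conclusion is false at and beyond the critical exponent in the unweighted analogue; the subcriticality must enter in the step you are missing. What the Pohozaev computation actually requires is $\int\left(|\nabla u|^2+u^{p+1}\right)(1+|x|^2)e^{-|x|^2/4}<\infty$, i.e.\ one extra power of $u$ and the full gradient, with slack in the Gaussian weight. The paper obtains exactly this in Lemma \ref{Thm1} by a Gidas--Spruck-type device: a Bochner-type identity for the traceless Hessian $E_{ij}=u_{ij}-\tfrac1n\Delta u\,\delta_{ij}$ is multiplied by $u^{\alpha}e^{-\gamma|x|^2}$ with a \emph{negative} exponent $\alpha$ chosen so that a discriminant is positive precisely when $p<p_*$, and, when $\alpha+2<0$, the negative powers of $u$ are controlled by the comparison lower bound $u\gtrsim|x|^{-(\beta+1)/\lambda}$ at infinity. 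That is where positivity of $u$ does the real work; your use of positivity via Jensen lands you at an estimate too weak to proceed. To repair your proof you would need to supply this (or an equivalent) gain of integrability, which is the substance of the paper's argument rather than a cosmetic modification of Giga--Kohn.
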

	In this article, we will introduce a new idea to get the global estimate for the Pohozaev's identity, which is the key point of Theorem \ref{0928thm} and will be explained in Lemma \ref{Thm1}. The idea is based a differential identity and integration by parts. We are greatly inspired by the works of Gidas and Spruck \cite{MR615628}: they developed the method and proved the Liouville theorem for the equation
	\begin{equation}\label{GS}
		\Delta u + u^p = 0, u \geq 0, 1 < p < p_*.
	\end{equation}
	Quittner \cite{MR4255053} got the Liouville theorem for  
	\begin{equation}\label{Quittner}
		v_{t} - \Delta v - v^p = 0, v > 0, 1 < p < p_*\text{ in $\mathbb R \times \mathbb R^n$.}
	\end{equation}
	And the main idea of Quittner's proof is similar to Giga-Kohn's: he used refined energy estimates for suitably rescaled solutions which yield a positive stationary solution. So the problem \eqref{Quittner} is reduced to \eqref{GS}.
	For more elliptic equations, M. F. Bidaut-V\'eron, M. Garcia-Huidobro and L. V\'eron \cite{MR3959864, MR4150912,  MR4043662} used integration by parts to get the Liouville theorem for the equations:
	\begin{equation}
		\Delta u + u^p |\nabla v|^q = 0, u \geq 0,
	\end{equation}
	and
	\begin{equation}
		\Delta u + N u^p + M|\nabla v|^q = 0, M > 0, N > 0, u \geq 0.
	\end{equation}
	For more applications, one can refer to \cite{MR4753063, MR4519639, ma2023liouvilletheoremellipticequations, Ma2024liouvilletheoremellipticequations,wu2023liouvilletheoremquasilinearelliptic}.

	\textbf{Acknowledgement } The author was supported by National Natural Science Foundation of China (Grants 11721101 and 12141105) and National Key Research and Development Project (Grants SQ2020YFA070080). The author would like to thank Prof. Xi-Nan Ma for valuable comments and suggestions.
	
	\section{Proof of theorem}
	In this section, we will use ``$\lesssim$'' to drop some positive constants independent of $R$ and the value of $u$ in $\mathbb R^n \backslash B_{r_0}$, where $r_0 > 1$ is a fixed constant. Besides all of the integrations are over $\mathbb R^n$.
	Before we prove Theorem \ref{0928thm}, the following lemma is necessary:
	\begin{lemma}\label{Thm1}
		Suppose $1 < p < p_*$ and  $n \geq 2$. Consider the equation:
		\begin{equation}\label{0910equ1}
		\Delta u - \lambda x \cdot \nabla u + u^{p} - \beta u = 0, \quad u \geq 0,\quad x \in \mathbb R^n.
	\end{equation}
		For any  $\lambda > 0, \beta \geq 0$ , if $u \geq 0$ is a solution of \eqref{0910equ1} , then for any $\gamma > 0$, 
		\begin{equation}
			\int_{\mathbb R^n}  u^{p + 1} e^{-\gamma |x|^2} + |\nabla u|^2 e^{-\gamma|x|^2} < \infty .
		\end{equation}
		
	\end{lemma}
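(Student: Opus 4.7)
The plan is to work with the natural Gaussian weight $\rho = e^{-\lambda |x|^2/2}$, for which the drift operator becomes a weighted Laplacian: $\Delta u - \lambda x \cdot \nabla u = \rho^{-1}\nabla \cdot (\rho \nabla u)$. The case $\gamma \geq \lambda/2$ of the lemma is immediate from $e^{-\gamma|x|^2} \leq \rho$, so I will focus on proving $\int u^{p+1}\rho + \int |\nabla u|^2 \rho < \infty$; the subcritical range $\gamma < \lambda/2$ is treated in parallel by replacing $\rho$ with $e^{-\gamma|x|^2}$, which introduces an extra drift $(\lambda - 2\gamma)x \cdot \nabla u$ that is handled by the same integration-by-parts machinery at the cost of an extra $|x|^2$-moment term on both sides.

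The key tool is the following identity, obtained by multiplying the equation by $u^{q-1}\phi^m \rho$ (where $\phi = \phi_R$ is a smooth cutoff, $\phi = 1$ on $B_R$, $\phi = 0$ outside $B_{2R}$, $|\nabla \phi| \leq C/R$), using $\rho(\Delta u - \lambda x \cdot \nabla u) = \nabla \cdot (\rho \nabla u)$, and integrating by parts twice:
\[
\int u^{p+q-1}\phi^m \rho + (1-q)\int u^{q-2}|\nabla u|^2 \phi^m \rho = \beta \int u^q \phi^m \rho - \tfrac{m}{q}\int u^q G_\phi \rho,
\]
where $G_\phi := \phi^{m-1}\Delta \phi + (m-1)\phi^{m-2}|\nabla \phi|^2 - \lambda \phi^{m-1}x \cdot \nabla \phi$ is bounded, supported in $B_{2R}\setminus B_R$, and satisfies $\int G_\phi \rho = 0$ (a consistency check, obtained by plugging in $u \equiv 1$). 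For $0 < q < 1$, both integrals on the left are non-negative, turning the identity into a genuine upper bound on $\int u^{p+q-1}\phi^m \rho$ in terms of $\int u^q \rho$ plus a bounded cut-off error.

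I would then iterate. Given a seed bound $\int u^{q_0}\rho < \infty$ with $q_0 \in (0,1)$, the identity raises the exponent by $p-1$, producing $\int u^{q_0 + k(p-1)}\rho < \infty$ for every $k \geq 0$; after finitely many steps the exponent exceeds $p+1$, and interpolation with $\int \rho < \infty$ yields $\int u^{p+1}\rho < \infty$. The gradient bound follows from the $q=2$ case of the identity, namely
\[
\int |\nabla u|^2 \phi^m \rho = \int u^{p+1}\phi^m \rho - \beta \int u^2 \phi^m \rho + \tfrac{m}{2}\int u^2 G_\phi \rho,
\]
since $\int u^2 \rho \leq \int \rho + \int u^{p+1}\rho < \infty$ by Young's inequality, and the cut-off term vanishes as $R \to \infty$.

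The hard part is the seed estimate: the identity is self-referential for the lowest admissible exponent, so some new input is required. Assuming $u > 0$ (otherwise $u \equiv 0$ by the strong maximum principle applied to the subsolution inequality $\Delta u - \lambda x \cdot \nabla u - \beta u \leq 0$), I expect the paper's novel device to enter through the $q \to 0^+$ limit of the key identity: expanding $u^q = 1 + q \log u + O(q^2)$ and using $\int G_\phi \rho = 0$ formally gives
\[
\int u^{p-1}\phi^m \rho + \int |\nabla \log u|^2 \phi^m \rho = \beta \int \phi^m \rho - m\int (\log u)\, G_\phi \rho.
\]
Controlling the annular logarithmic integral on the right (via a Harnack-type local bound on $u$, which ensures $\log u$ has at most polynomial growth and hence is $\rho$-integrable on each annulus with uniform bound) would close the circle and provide $\int u^{p-1}\rho < \infty$ as the seed from which the bootstrap begins.
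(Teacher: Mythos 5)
There is a genuine gap at the bootstrap step, and it is precisely the step where the hypothesis $p<p_*$ must enter (your argument never uses it, which is already a warning sign). Your key identity
\[
\int u^{p+q-1}\phi^m\rho+(1-q)\int u^{q-2}|\nabla u|^2\phi^m\rho=\beta\int u^{q}\phi^m\rho-\tfrac{m}{q}\int u^{q}G_\phi\,\rho
\]
is correct, and the seed is actually easier than you make it: taking $q\to0$ (i.e.\ multiplying by $u^{-1}\phi^m$) and absorbing the cutoff term by Cauchy--Schwarz into $\int|\nabla\log u|^2\phi^m\rho$ gives $\int u^{p-1}\rho\le\beta\int\rho<\infty$ with no need for logarithmic or Harnack estimates. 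The problem is the iteration. The identity is a one-sided bound only when $q\le1$, as you yourself note; so each step produces the exponent $p+q-1$ with $q\le1$, and the output exponents are confined to $(0,p]$. You cannot feed an exponent $q>1$ back into the identity, because then $(1-q)\int u^{q-2}|\nabla u|^2\phi^m\rho$ changes sign and the identity becomes a \emph{lower} bound on $\int u^{p+q-1}\phi^m\rho$ in terms of an uncontrolled gradient integral. Hence the scheme stalls at $\int u^{s}\rho<\infty$ for $s\le p$, one full power short of $p+1$. Your proposed closing step, the $q=2$ case, is exactly the energy identity $\int u^{p+1}\phi^m\rho=\int|\nabla u|^2\phi^m\rho+\beta\int u^{2}\phi^m\rho-\tfrac m2\int u^2G_\phi\rho$: one equation in the two unknowns $\int u^{p+1}\rho$ and $\int|\nabla u|^2\rho$, so it cannot bound either. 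This is not a presentational issue but the actual mathematical obstruction that forces the paper to do something harder.

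What the paper does instead is a Gidas--Spruck-type argument: it computes $u^{\alpha}e^{-\gamma|x|^2}(\Delta u)^2$ in two ways (once by integrating by parts twice, once by substituting the equation), subtracts, and is left with a quadratic form in the trace-free Hessian $E_{ij}=u_{ij}-\tfrac1n\Delta u\,\delta_{ij}$ and the tensor $L_{ij}=u_iu_j-\tfrac1n|\nabla u|^2\delta_{ij}$. The positivity of the discriminant of that form (Claim 2.1) is where $1<p<p_*$ is used, and the exponent $\alpha$ is chosen so that the resulting controlled quantity $\int u^{\alpha+2p}e^{-\gamma|x|^2}$ satisfies $\alpha+2p\ge p+1$; the quartic gradient term $\int u^{\alpha-2}|\nabla u|^4e^{-\gamma|x|^2}$ is what lets one absorb the error terms your iteration cannot handle. (A lower barrier $u\gtrsim|x|^{-(\beta+1)/\lambda}$ is also needed when $\alpha+2<0$.) To repair your proof you would need to import this second-order identity, or some equivalent device producing a coercive term in $|\nabla u|^4$ or $(\Delta u)^2$; the first-order multipliers $u^{q-1}$ alone do not suffice.
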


	\begin{proof}[Proof of Lemma \ref{Thm1}]
		
	By strong maximum principle, we can suppose $u > 0$.  Define for any $1 \leq i, j \leq n$,
	\begin{align}
		E_{i j} &= u_{i j} - \frac{1}{n}\Delta u \delta_{ij},\\
		L_{i j} &= u_i u_j - \frac{1}{n}|\nabla u|^2 \delta_{i j}.
	\end{align}
	In the following contents, we always use the Einstein summation and the index $i, j$ are summed from $1$ to $n$.
	Firstly, we know
	\begin{align*}
		&u^{\alpha}e^{-\gamma |x|^2}(\Delta u)^2 \\
		=& \left( u^{\alpha}e^{-\gamma |x|^2}\Delta u u_{i} \right)_i - \alpha u^{\alpha - 1}|\nabla u|^2 e^{-\gamma |x|^2}\Delta u + 2\gamma u^{\alpha}e^{-\gamma |x|^2}\Delta u u_{i} x_i - u^{\alpha}e^{-\gamma |x|^2}\Delta u_i u_{i} \\
		=& \left( u^{\alpha}e^{-\gamma |x|^2}\Delta u u_{i} \right)_i - \alpha u^{\alpha - 1}|\nabla u|^2 e^{-\gamma |x|^2}\Delta u + 2\gamma u^{\alpha}e^{-\gamma |x|^2}\Delta u u_{i} x_i - \left( u^{\alpha}e^{-\gamma |x|^2} u_{i j} u_{i} \right)_{j} \\
		&+ \alpha  u^{\alpha - 1}e^{-\gamma |x|^2} u_{i j} u_{i}u_{j} - 2\gamma  u^{\alpha}e^{-\gamma |x|^2} u_{i j} u_{i} x_{j} +  u^{\alpha}e^{-\gamma |x|^2} u_{i j}^2\\
		=& \left( u^{\alpha}e^{-\gamma |x|^2}\Delta u u_{i} \right)_i - \alpha u^{\alpha - 1}|\nabla u|^2 e^{-\gamma |x|^2}\Delta u + 2\gamma u^{\alpha}e^{-\gamma |x|^2}\Delta u u_{i} x_i - \left( u^{\alpha}e^{-\gamma |x|^2} u_{i j} u_{i} \right)_{j} \\
		&+ \alpha  u^{\alpha - 1}e^{-\gamma |x|^2} \left( E_{i j} + \frac{1}{n}\Delta u \delta_{i j}\right) u_{i}u_{j} - 2\gamma  u^{\alpha}e^{-\gamma |x|^2} \left( E_{i j} + \frac{1}{n}\Delta u \delta_{i j}\right) u_{i} x_{j} \\
		&+ u^{\alpha}e^{-\gamma |x|^2} E_{i j}^2 + \frac{1}{n}u^{\alpha}e^{-\gamma |x|^2}(\Delta u)^2,
	\end{align*}
	
	\begin{equation}\label{0830equ1}
		\begin{aligned}
			\Rightarrow & \frac{n - 1}{n} u^{\alpha}e^{-\gamma |x|^2}(\Delta u)^2 \\
		=& \left( u^{\alpha}e^{-\gamma |x|^2}\Delta u u_{i} \right)_i - \left( u^{\alpha}e^{-\gamma |x|^2} u_{i j} u_{i} \right)_{j} - \frac{n - 1}{n} \alpha u^{\alpha - 1}|\nabla u|^2 e^{-\gamma |x|^2}\Delta u \\
		&+ 2\gamma \frac{n - 1}{n} u^{\alpha}e^{-\gamma |x|^2}\Delta u u_{i} x_i   + \alpha  u^{\alpha - 1}e^{-\gamma |x|^2} E_{i j}  u_{i}u_{j} - 2\gamma  u^{\alpha}e^{-\gamma |x|^2}E_{i j}  u_{i} x_{j}  + u^{\alpha}e^{-\gamma |x|^2} E_{i j}^2 .
		\end{aligned}
	\end{equation}
	Next, using the equation \eqref{0910equ1} we also have
	\begin{align*}
		&u^{\alpha}e^{-\gamma |x|^2}(\Delta u)^2 \\
		=& \lambda u^{\alpha}e^{-\gamma |x|^2}\Delta u x_i u_i  + \beta u^{\alpha + 1}e^{-\gamma |x|^2}\Delta u - u^{\alpha + p}e^{-\gamma |x|^2}\Delta u\\
		=& \lambda u^{\alpha}e^{-\gamma |x|^2}\Delta u x_i u_i + \left(\beta u^{\alpha + 1}e^{-\gamma |x|^2}u_i \right)_i - (\alpha + 1)\beta u^{\alpha }|\nabla u|^2 e^{-\gamma |x|^2} \\
		&+ 2\gamma \beta u^{\alpha + 1}e^{-\gamma |x|^2}u_i x_i - \left( u^{\alpha + p}e^{-\gamma |x|^2}u_{i} \right)_i + (\alpha + p)u^{\alpha + p - 1}|\nabla u|^2 e^{-\gamma |x|^2} - 2\gamma u^{\alpha + p}e^{-\gamma |x|^2}u_{i} x_i\\
		=&\lambda u^{\alpha}e^{-\gamma |x|^2}\Delta u x_i u_i + \left(\beta u^{\alpha + 1}e^{-\gamma |x|^2}u_i \right)_i - (\alpha + 1)\beta u^{\alpha }|\nabla u|^2 e^{-\gamma |x|^2} + 2\gamma \beta u^{\alpha + 1}e^{-\gamma |x|^2}u_i x_i \\
		&- \left( u^{\alpha + p}e^{-\gamma |x|^2}u_{i} \right)_i - 2\gamma u^{\alpha}e^{-\gamma |x|^2}u_{i} x_i \left( -\Delta u + \lambda u_j x_j +\beta u \right) \\
		&+ (\alpha + p)u^{\alpha - 1}|\nabla u|^2 e^{-\gamma |x|^2}\left( -\Delta u + \lambda u_i x_i +\beta u \right) ,
	\end{align*}
	\begin{equation}\label{0830equ2}
		\begin{aligned}
			\Rightarrow &u^{\alpha}e^{-\gamma |x|^2}(\Delta u)^2 = \left( u^{\alpha}e^{-\gamma |x|^2}\Delta u u_{i} \right)_i - \left( \frac{1}{2} u^{\alpha }e^{-\gamma |x|^2}u_j x_j u_i \right)_i + \left( \lambda + 2\gamma\right) u^{\alpha}e^{-\gamma |x|^2}\Delta u x_i u_i \\
			& + (p - 1)\beta u^{\alpha }|\nabla u|^2 e^{-\gamma |x|^2} - 2 \gamma \lambda u^{\alpha}e^{-\gamma |x|^2}u_{i} x_i  u_j x_j + (\alpha + p)u^{\alpha - 1}|\nabla u|^2 e^{-\gamma |x|^2}\left( -\Delta u + \lambda u_i x_i  \right) .
		\end{aligned}
	\end{equation}
	
	Combine \eqref{0830equ1} and \eqref{0830equ2} to eliminate the term $u^{\alpha}e^{-\gamma |x|^2}(\Delta u)^2$,
	
	\begin{align}
		0&= \left( u^{\alpha}e^{-\gamma |x|^2}\Delta u u_{i} \right)_i - \left( u^{\alpha}e^{-\gamma |x|^2} u_{i j} u_{i} \right)_{j} - \frac{n - 1}{n} \alpha u^{\alpha - 1}|\nabla u|^2 e^{-\gamma |x|^2}\Delta u \notag \\
		&+ 2\gamma\frac{n - 1}{n}  u^{\alpha}e^{-\gamma |x|^2}\Delta u u_{i} x_i + \alpha  u^{\alpha - 1}e^{-\gamma |x|^2} E_{i j}  u_{i}u_{j} - 2\gamma  u^{\alpha}e^{-\gamma |x|^2}E_{i j}  u_{i} x_{j}  + u^{\alpha}e^{-\gamma |x|^2} E_{i j}^2 \notag\\
		&- \frac{n - 1}{n}\Big[  - \left( u^{\alpha + p}e^{-\gamma |x|^2}u_{i} \right)_i + \left(\beta u^{\alpha + 1}e^{-\gamma |x|^2}u_i \right)_i + \left( \lambda + 2\gamma\right) u^{\alpha}e^{-\gamma |x|^2}\Delta u x_i u_i \notag\\
		& + (p - 1)\beta u^{\alpha }|\nabla u|^2 e^{-\gamma |x|^2} - 2\gamma \lambda u^{\alpha}e^{-\gamma |x|^2}u_{i} x_i  u_j x_j + (\alpha + p)u^{\alpha - 1}|\nabla u|^2 e^{-\gamma |x|^2}\left( -\Delta u + \lambda u_i x_i  \right) \Big]  \notag\\
		&= W_1  + \frac{n - 1}{n} p u^{\alpha - 1}|\nabla u|^2 e^{-\gamma |x|^2}\Delta u - \frac{n - 1}{n}\lambda  u^{\alpha}e^{-\gamma |x|^2}\Delta u u_{i} x_i  \notag\\
		&+ \alpha  u^{\alpha - 1}e^{-\gamma |x|^2} E_{i j}  u_{i}u_{j} - 2\gamma  u^{\alpha}e^{-\gamma |x|^2}E_{i j}  u_{i} x_{j}  + u^{\alpha}e^{-\gamma |x|^2} E_{i j}^2 \notag\\
		\label{0830equ3} &- \frac{n - 1}{n}\Big[    (p - 1)\beta u^{\alpha }|\nabla u|^2 e^{-\gamma |x|^2} - 2\gamma \lambda u^{\alpha}e^{-\gamma |x|^2}u_{i} x_i  u_j x_j +  \lambda (\alpha + p)u^{\alpha - 1}|\nabla u|^2 e^{-\gamma |x|^2} u_i x_i \Big],
	\end{align}
	where
	\begin{align*}
		W_1 &= \left( u^{\alpha}e^{-\gamma |x|^2}\Delta u u_{i} \right)_i - \left( u^{\alpha}e^{-\gamma |x|^2} u_{i j} u_{i} \right)_{j} +\frac{n - 1}{n} \left( u^{\alpha + p}e^{-\gamma |x|^2}u_{i} \right)_i - \frac{n - 1}{n}\left(\beta u^{\alpha + 1}e^{-\gamma |x|^2}u_i \right)_i\\
		 &= \frac{1}{n}\left( u^{\alpha}e^{-\gamma |x|^2}\Delta u u_{i} \right)_i - \left( u^{\alpha}e^{-\gamma |x|^2} u_{i j} u_{i} \right)_{j} +\frac{n - 1}{n} \left( u^{\alpha }e^{-\gamma |x|^2}u_j x_j u_{i} \right)_i .
	\end{align*}
	On the other hand, since 
	\begin{align*}
		&u^{\alpha - 1}|\nabla u|^2 e^{-\gamma |x|^2}\Delta u\\
		=&(u^{\alpha - 1}|\nabla u|^2 e^{-\gamma |x|^2}u_i)_i - (\alpha - 1) u^{\alpha - 2}|\nabla u|^4 e^{-\gamma |x|^2} - 2u^{\alpha - 1} e^{-\gamma |x|^2}u_{i j}u_i u_j + 2\gamma u^{\alpha - 1}|\nabla u|^2 e^{-\gamma |x|^2}u_i x_i,
	\end{align*}
	\begin{equation}\label{0908equ1}
		\begin{aligned}
			\Rightarrow & \frac{n + 2}{n} u^{\alpha - 1}|\nabla u|^2 e^{-\gamma |x|^2}\Delta u\\
		=&(u^{\alpha - 1}|\nabla u|^2 e^{-\gamma |x|^2}u_i)_i - (\alpha - 1) u^{\alpha - 2}|\nabla u|^4 e^{-\gamma |x|^2} - 2u^{\alpha - 1} e^{-\gamma |x|^2}E_{i j}u_i u_j + 2\gamma u^{\alpha - 1}|\nabla u|^2 e^{-\gamma |x|^2}u_i x_i,
		\end{aligned}
	\end{equation}
	and 
	\begin{align*}
		&u^{\alpha}e^{-\gamma |x|^2}\Delta u u_{i} x_i =\left( u^{\alpha}e^{-\gamma |x|^2}u_j u_{i} x_i \right)_j - \alpha u^{\alpha - 1}|\nabla u|^2 e^{-\gamma |x|^2} u_{i} x_i + 2\gamma u^{\alpha}e^{-\gamma |x|^2}u_j x_j u_{i} x_i\\
		& - u^{\alpha}e^{-\gamma |x|^2}u_j u_{ij} x_i - u^{\alpha}|\nabla u|^2 e^{-\gamma |x|^2}, 
	\end{align*}
	\begin{equation}\label{0908equ2}
		\begin{aligned}
			\Rightarrow & \frac{n + 1}{n} u^{\alpha}e^{-\gamma |x|^2}\Delta u u_{i} x_i =\left( u^{\alpha}e^{-\gamma |x|^2}u_j u_{i} x_i \right)_j - \alpha u^{\alpha - 1}|\nabla u|^2 e^{-\gamma |x|^2} u_{i} x_i + 2\gamma u^{\alpha}e^{-\gamma |x|^2}u_j x_j u_{i} x_i \\
			&- u^{\alpha}e^{-\gamma |x|^2} E_{ij} x_i u_j - u^{\alpha}|\nabla u|^2 e^{-\gamma |x|^2},
		\end{aligned}
	\end{equation}
	then \eqref{0830equ3} becomes
	\begin{align*}
		0&= W_2  + \frac{n - 1}{n + 2} p \Big[ - (\alpha - 1) u^{\alpha - 2}|\nabla u|^4 e^{-\gamma |x|^2} - 2u^{\alpha - 1} e^{-\gamma |x|^2}E_{i j}u_i u_j + 2\gamma u^{\alpha - 1}|\nabla u|^2 e^{-\gamma |x|^2}u_i x_i \Big]\\
		&  -\frac{n - 1}{n + 1}\lambda  \Big( - \alpha u^{\alpha - 1}|\nabla u|^2 e^{-\gamma |x|^2} u_{i} x_i + 2\gamma u^{\alpha}e^{-\gamma |x|^2}u_j x_j u_{i} x_i - u^{\alpha}e^{-\gamma |x|^2} E_{ij} x_i u_j\\
		&- u^{\alpha}|\nabla u|^2 e^{-\gamma |x|^2} \Big) + \alpha  u^{\alpha - 1}e^{-\gamma |x|^2} E_{i j}  u_{i}u_{j}  - 2\gamma  u^{\alpha}e^{-\gamma |x|^2}E_{i j}  u_{i} x_{j}  + u^{\alpha}e^{-\gamma |x|^2} E_{i j}^2\\
		&+ 2\gamma \lambda \frac{n - 1}{n}  u^{\alpha}e^{-\gamma |x|^2}u_{i} x_i  u_j x_j   - \frac{n - 1}{n}\lambda (\alpha + p) u^{\alpha - 1}|\nabla u|^2 e^{-\gamma|x|^2}x_i u_i  - \frac{n - 1}{n} (p - 1)\beta  u^{\alpha }|\nabla u|^2 e^{-\gamma |x|^2},
	\end{align*}
	where
	\begin{align*}
		W_2 &= W_1 + \frac{n - 1}{n + 2} p(u^{\alpha - 1}|\nabla u|^2 e^{-\gamma |x|^2}u_i)_i  -\frac{n - 1}{n + 1}\lambda \left( u^{\alpha}e^{-\gamma |x|^2}u_j u_{i} x_i \right)_j\\
		&= D_1 \left( u^{\alpha}e^{-\gamma |x|^2}\Delta u u_{i} \right)_i + D_2\left( u^{\alpha}e^{-\gamma |x|^2} u_{i j} u_{i} \right)_{j} + D_3 (u^{\alpha - 1}|\nabla u|^2 e^{-\gamma |x|^2}u_i)_i \\
		&+ D_4 \left( u^{\alpha}e^{-\gamma |x|^2}u_j u_{i} x_i \right)_j .
	\end{align*}
	Using the fact that $|\nabla u|^4 = \frac{n}{n - 1}L_{i j}^2$,  we finally get that
	\begin{align}\label{0408equ2}
		 0&= W_2 + \underset{I_1}{ u^{\alpha}e^{-\gamma |x|^2} E_{i j}^2 } -  \underset{I_2}{\frac{n}{n + 2} p  (\alpha - 1) u^{\alpha - 2}L_{i j}^2 e^{-\gamma |x|^2}} + \underset{I_3}{\left( \alpha - 2p\frac{n - 1}{n + 2}  \right) u^{\alpha - 1}e^{-\gamma |x|^2} E_{i j} L_{i j}} \notag\\
		 &+ \left( - 2\gamma  +\frac{n - 1}{n + 1}\lambda\right)  u^{\alpha}e^{-\gamma |x|^2}E_{i j}  u_{i} x_{j} + \left( 2\gamma \lambda\frac{n - 1}{n}   - 2\gamma \lambda  \frac{n - 1}{n + 1}  \right) u^{\alpha}e^{-\gamma |x|^2}u_{i} x_i  u_j x_j  \notag\\
		&+ \Bigg[  - \frac{n - 1}{n} \lambda(\alpha + p) + 2\gamma\frac{n - 1}{n + 2} p + \frac{n - 1}{n + 1}\lambda \alpha\Bigg]u^{\alpha - 1}|\nabla u|^2 e^{-\gamma|x|^2}x_i u_i \notag\\
		& + \Bigg[ - \frac{n - 1}{n} (p - 1)\beta + \frac{n - 1}{n + 1}\lambda  \Bigg] u^{\alpha }|\nabla u|^2 e^{-\gamma |x|^2}.
	\end{align}
	Firstly, we deal with the term $I_1 + I_2 + I_3$ and we hope it is positive.
	Define
	\begin{equation}
		\Delta :=  \frac{4n}{n + 2} p (1 - \alpha) - \left( \alpha - 2p\frac{n - 1}{n + 2}  \right)^2 .
	\end{equation}
	Choosing
	
	\begin{numcases}{}
		\alpha = 1 - p \text{ when $1 < p \leq \frac{n + 2}{n}$,}\\
		\alpha = -\frac{2p}{n + 2} \text{ when $\frac{n + 2}{n} < p < p_*$, }	
	\end{numcases}
	then for any $1 < p < p_*$ and $n \geq 2$, we have
	\begin{equation}
		\alpha + p + 1 > 0.
	\end{equation}
	
	\begin{claim}\label{Delta}
		For any $1 < p < p_*$ and $n \geq 2$, we have
		\begin{equation}
			\Delta > 0.
		\end{equation}
	\end{claim}
	So there exists $\delta = \delta(n, p) > 0$, such that
	\begin{align}\label{0408equ3}
		I_1 + I_2 + I_3 \geq \delta u^{\alpha}e^{-\gamma |x|^2} E_{i j}^2 + \delta u^{\alpha - 2}|\nabla u|^4 e^{-\gamma |x|^2}. 
	\end{align}
	Besides, using \eqref{0830equ1}, \eqref{0908equ1} and \eqref{0908equ2}, we know
	\begin{equation}\label{0408equ1}
		\begin{aligned}
			 & \frac{n - 1}{n} u^{\alpha}e^{-\gamma |x|^2}(\Delta u)^2 \\
		=& \left( u^{\alpha}e^{-\gamma |x|^2}\Delta u u_{i} \right)_i - \left( u^{\alpha}e^{-\gamma |x|^2} u_{i j} u_{i} \right)_{j} - \frac{n - 1}{n + 2} \alpha \Bigg[ (u^{\alpha - 1}|\nabla u|^2 e^{-\gamma |x|^2}u_i)_i - (\alpha - 1) u^{\alpha - 2}|\nabla u|^4 e^{-\gamma |x|^2} \\
		&- 2u^{\alpha - 1} e^{-\gamma |x|^2}E_{i j}u_i u_j + 2\gamma u^{\alpha - 1}|\nabla u|^2 e^{-\gamma |x|^2}u_i x_i \Bigg] \\
		&+ 2\gamma \frac{n - 1}{n + 1} \Bigg[ \left( u^{\alpha}e^{-\gamma |x|^2}u_j u_{i} x_i \right)_j - \alpha u^{\alpha - 1}|\nabla u|^2 e^{-\gamma |x|^2} u_{i} x_i + 2\gamma u^{\alpha}e^{-\gamma |x|^2}u_j x_j u_{i} x_i - u^{\alpha}e^{-\gamma |x|^2} E_{ij} x_i u_j\\
		&- u^{\alpha}|\nabla u|^2 e^{-\gamma |x|^2} \Bigg]  + \alpha  u^{\alpha - 1}e^{-\gamma |x|^2} E_{i j}  u_{i}u_{j} - 2\gamma  u^{\alpha}e^{-\gamma |x|^2}E_{i j}  u_{i} x_{j}  + u^{\alpha}e^{-\gamma |x|^2} E_{i j}^2 .
		\end{aligned}
	\end{equation}
	Choosing $0 < \varepsilon << \delta$ and substituting \eqref{0408equ3}, \eqref{0408equ1} into \eqref{0408equ2} , we get
	
	\begin{align}
		 -W_3&\geq  \varepsilon u^{\alpha}e^{-\gamma |x|^2} E_{i j}^2 + \varepsilon u^{\alpha - 2}|\nabla u|^4 e^{-\gamma |x|^2} + \varepsilon u^{\alpha} e^{-\gamma |x|^2}(\Delta u)^2 \notag \\
		&+ \left(  - 2\gamma  +\frac{n - 1}{n + 1}\lambda + O(\varepsilon)\right)  u^{\alpha}e^{-\gamma |x|^2}E_{i j}  u_{i} x_{j}  - C u^{\alpha}|\nabla u|^2 e^{-\gamma|x|^2}|x|^2  \notag\\
		&+ \Bigg[ - \frac{n - 1}{n} \lambda(\alpha + p) + 2\gamma\frac{n - 1}{n + 2} p + \alpha \frac{n - 1}{n + 1}\lambda  + O(\varepsilon) \Bigg]u^{\alpha - 1}|\nabla u|^2 e^{-\gamma|x|^2}x_i u_i \notag\\
		& + \Bigg[  - \frac{n - 1}{n} (p - 1)\beta +\frac{n - 1}{n + 1}\lambda  + O(\varepsilon)\Bigg] u^{\alpha }|\nabla u|^2 e^{-\gamma |x|^2},
	\end{align}
	where 
	\begin{align*}
		W_3 & = W_2 - \frac{n}{n - 1}\varepsilon \Bigg[ \left( u^{\alpha}e^{-\gamma |x|^2}\Delta u u_{i} \right)_i - \left( u^{\alpha}e^{-\gamma |x|^2} u_{i j} u_{i} \right)_{j} - \frac{n - 1}{n + 2} \alpha  (u^{\alpha - 1}|\nabla u|^2 e^{-\gamma |x|^2}u_i)_i \\
		&+ 2\gamma \frac{n - 1}{n + 1} \left( u^{\alpha}e^{-\gamma |x|^2}u_j u_{i} x_i \right)_j \Bigg].
	\end{align*}
	Then for any fixed $\lambda, \beta$ $\gamma$,
	\begin{equation}\label{09080827equ}
		\begin{aligned}
			\Rightarrow &  u^{\alpha}e^{-\gamma |x|^2} E_{i j}^2 + u^{\alpha - 2}|\nabla u|^4 e^{-\gamma |x|^2} +  u^{\alpha} e^{-\gamma |x|^2}(\Delta u)^2 \lesssim u^{\alpha}|\nabla u|^2 e^{-\gamma|x|^2}(|x|^2 + 1) - W_3
		& .
		\end{aligned}
	\end{equation}
	Using the equation \eqref{0910equ1} and the facts that $p > 1$ and $\alpha + 2p > \alpha + p + 1 > 0$, by Young's inequality we konw
	
	\begin{align}
		(u^p - \beta u)^2 &= \left( \Delta u - \frac{1}{2}x\cdot \nabla u\right)^2,\notag\\
		\Rightarrow u^{\alpha + 2p} + u^{\alpha + 2} &\lesssim  u^{\alpha}(\Delta u)^2 +  u^{\alpha} |\nabla u|^2 |x|^2 +  u^{\alpha + p + 1} \notag \\
			&\lesssim  u^{\alpha} (\Delta u)^2 +   u^{\alpha} |\nabla u|^2 |x|^2 + \varepsilon u^{\alpha + 2p} + 1, \notag\\
		\label{09080826equ}	\Rightarrow u^{\alpha + 2p} + u^{\alpha + 2} &\lesssim  u^{\alpha} (\Delta u)^2 +  u^{\alpha} |\nabla u|^2 |x|^2 + 1.
	\end{align}
	Substitute \eqref{09080826equ} into \eqref{09080827equ}:
	\begin{equation}\label{09080828equ}
		\begin{aligned}
			\Rightarrow & u^{\alpha}e^{-\gamma |x|^2} E_{i j}^2 +   u^{\alpha - 2}|\nabla u|^4 e^{-\gamma |x|^2} +  u^{\alpha} e^{-\gamma |x|^2}(\Delta u)^2 +  u^{\alpha + 2p} e^{-\gamma |x|^2}  +  u^{\alpha + 2} e^{-\gamma |x|^2} \\
		&\lesssim - W_3 + u^{\alpha}|\nabla u|^2 e^{-\gamma|x|^2}(|x|^2 + 1) + e^{-\gamma |x|^2} .
		\end{aligned}
	\end{equation}
	Let $R > 1$ and $\eta$ be a cut-off function, satisfying 
	\begin{numcases}{}
		\eta \equiv 1, \text{ in $B_R$ ,}\\
		\eta \equiv 0, \text{ in $\mathbb R^n \backslash B_{2R}$ .}	
	\end{numcases}
	Choosing $\theta > 0$ large enough, multiply \eqref{09080828equ} by $\eta^{\theta}$ and integrate by parts:
	\begin{equation}
		\begin{aligned}
			& \int  u^{\alpha}e^{-\gamma |x|^2} |E_{i j}|^2 \eta^{\theta} +   u^{\alpha - 2}|\nabla u|^4 e^{-\gamma |x|^2} \eta^{\theta} +  u^{\alpha} e^{-\gamma |x|^2}(\Delta u)^2 \eta^{\theta} + u^{\alpha + 2p} e^{-\gamma |x|^2}\eta^{\theta} + u^{\alpha + 2} e^{-\gamma |x|^2}\eta^{\theta} \\
			\lesssim & \int  u^{\alpha}|\nabla u|^2 e^{-\gamma|x|^2}(|x|^2 + 1) \eta^{\theta} +  e^{-\gamma |x|^2}\eta^{\theta} +  u^{\alpha}e^{-\gamma |x|^2}\Delta u u_{i} \left(\eta^{\theta} \right)_i +  u^{\alpha}e^{-\gamma |x|^2} u_{i j} u_{i} \left( \eta^{\theta}\right)_{j}   \\
		&+  u^{\alpha - 1}|\nabla u|^2 e^{-\gamma |x|^2}u_i \left( \eta^{\theta} \right)_i+  u^{\alpha}e^{-\gamma |x|^2}u_j u_{i} x_i  \left( \eta^{\theta} \right)_j .\\
		\end{aligned}
	\end{equation}
	Using Young's inequality, we finally get
	\begin{equation}
		\begin{aligned}
			&\int  u^{\alpha - 2}|\nabla u|^4 e^{-\gamma |x|^2} \eta^{\theta}  +  u^{\alpha + 2p} e^{-\gamma |x|^2}\eta^{\theta}  +  u^{\alpha} e^{-\gamma |x|^2}(\Delta u)^2 \eta^{\theta}  + u^{\alpha + 2} e^{-\gamma |x|^2}\eta^{\theta}  \\
			\lesssim & \int  u^{\alpha}|\nabla u|^2 e^{-\gamma|x|^2}(|x|^2 + 1) \eta^{\theta} +  e^{-\gamma |x|^2}\eta^{\theta} + R^{-2} u^{\alpha}|\nabla u|^2 e^{-\gamma |x|^2} \eta^{\theta - 2} + R^{-1} u^{\alpha} |\nabla u|^2 e^{-\gamma |x|^2}|x|\eta^{\theta - 1} \\
			\lesssim & \int   e^{-\gamma |x|^2}\eta^{\theta} +  u^{\alpha}|\nabla u|^2 \Big[ e^{-\gamma|x|^2}(|x|^2 + 1) \eta^{\theta}  + R^{-2}  e^{-\gamma |x|^2} \eta^{\theta - 2} + R^{-1}  e^{-\gamma |x|^2}|x|\eta^{\theta - 1}\Big] \\
			\lesssim & \int   e^{-\gamma |x|^2}\eta^{\theta} +  e^{-\gamma|x|^2} u^{\alpha + 2} \Big[ (|x|^4 + 1) \eta^{\theta}  + R^{-4}   \eta^{\theta - 4} \Big] + \varepsilon u^{\alpha - 2}|\nabla u|^4 e^{-\gamma |x|^2} \eta^{\theta} . \\
		\end{aligned}
	\end{equation}
	Thus it follows that
	\begin{align}\label{0309}
		&\int_{\mathbb R^n}  u^{\alpha - 2}|\nabla u|^4 e^{-\gamma |x|^2} \eta^{\theta}  +  u^{\alpha + 2p} e^{-\gamma |x|^2}\eta^{\theta}  +  u^{\alpha} e^{-\gamma |x|^2}(\Delta u)^2 \eta^{\theta}  + u^{\alpha + 2} e^{-\gamma |x|^2}\eta^{\theta}  \notag\\
			\lesssim & \int_{\mathbb R^n}   e^{-\gamma |x|^2}\eta^{\theta} +  e^{-\gamma|x|^2} u^{\alpha + 2} \Big[ (|x|^4 + 1) \eta^{\theta}  + R^{-4}   \eta^{\theta - 4} \Big] . 
	\end{align}
	There are two different cases depending on $\alpha$:

	\begin{itemize}
		\item If $\alpha + 2 \geq 0$, then by Young's inequality, we get
\begin{equation}
	u^{\alpha + 2} \leq \varepsilon u^{\alpha + 2p} + M_1 .
\end{equation}
Finally we get for any $\beta, \lambda \in \mathbb R$ and $\gamma > 0$
	\begin{equation}\label{0908est}
		\begin{aligned}
			\int_{\mathbb R^n} \left( u^{\alpha - 2}|\nabla u|^4 e^{-\gamma |x|^2}  +  u^{\alpha + 2p} e^{-\gamma |x|^2}  +  u^{\alpha} e^{-\gamma |x|^2}(\Delta u)^2   + u^{\alpha + 2} e^{-\gamma |x|^2} \right)\eta^\theta \lesssim 1.
		\end{aligned}
	\end{equation}
	
	\item If $\alpha + 2 < 0$, for $W > 0$ we consider the function 
	\begin{equation}
		w := W|x|^{-\frac{\beta + 1}{\lambda}}, 
	\end{equation} 
	and the linear operator
	\begin{equation}
		\mathcal L v:= \Delta v - \lambda x\cdot \nabla v - \beta v.
	\end{equation}
	Then we find that 
	\begin{equation}
		\mathcal L u \leq 0
	\end{equation}
	\begin{equation}
		\mathcal L w = W \left[ |x|^{-\frac{\beta + 1}{\lambda}} + \frac{\beta + 1}{\lambda}\cdot\left( \frac{\beta + 1}{\lambda} + 2 - n \right) |x|^{-\frac{\beta + 1}{\lambda} - 2}\right].
	\end{equation}
	So there exists $r_0 > 0$ such that when $|x| > r_0$, it holds that
	\begin{equation}
		\mathcal L w \geq 0.
	\end{equation}
	Letting 
	\begin{equation}
		W = r_0^{\frac{\beta + 1}{\lambda}} \cdot \min_{|x| = r_0} u,
	\end{equation}
	then we have $u \geq w$ when $|x| \geq r_0$. Thus \eqref{0309} becomes that
	\begin{align}\label{0309equ1}
		&\int_{\mathbb R^n}  u^{\alpha - 2}|\nabla u|^4 e^{-\gamma |x|^2} \eta^{\theta}  +  u^{\alpha + 2p} e^{-\gamma |x|^2}\eta^{\theta}  +  u^{\alpha} e^{-\gamma |x|^2}(\Delta u)^2 \eta^{\theta}  + u^{\alpha + 2} e^{-\gamma |x|^2}\eta^{\theta}  \notag\\
		\lesssim & 1 + \int_{\mathbb R^n}  e^{-\gamma|x|^2} u^{\alpha + 2} \Big[ (|x|^4 + 1) \eta^{\theta}  + R^{-4}   \eta^{\theta - 4} \Big] \notag\\
		\lesssim & 1 + \int_{|x| < r_0}  e^{-\gamma|x|^2} u^{\alpha + 2} \Big[ (|x|^4 + 1) \eta^{\theta}  + R^{-4}   \eta^{\theta - 4} \Big] + \int_{|x| \geq r_0}  e^{-\gamma|x|^2} u^{\alpha + 2} \Big[ (|x|^4 + 1) \eta^{\theta}  + R^{-4}   \eta^{\theta - 4} \Big] \notag\\
		\lesssim & 1 + \int_{|x| \geq r_0} |x|^{-\frac{\beta + 1}{\lambda}(\alpha + 2)} e^{-\gamma|x|^2}  \Big[ (|x|^4 + 1) \eta^{\theta}  + R^{-4}   \eta^{\theta - 4} \Big] < \infty.
	\end{align}
	\end{itemize}
	Anyway, combining these two cases and letting $R\rightarrow \infty$, we have already got that
	\begin{align}
		\int_{\mathbb R^n}  u^{\alpha - 2}|\nabla u|^4 e^{-\gamma |x|^2}   +  u^{\alpha + 2p} e^{-\gamma |x|^2}  +  u^{\alpha} e^{-\gamma |x|^2}(\Delta u)^2   + u^{\alpha + 2} e^{-\gamma |x|^2}  < \infty.
	\end{align}
	Since $\alpha + p - 1 \geq 0$, by Young's inequality we get

	\begin{equation}\label{0918equ1}
		u^{p + 1} e^{-\gamma |x|^2} \leq u^{\alpha + 2p} e^{-\gamma |x|^2} + M_2  e^{-\gamma |x|^2},
	\end{equation}
	with $(p_2, q_2) = \left(\frac{\alpha + 2p}{p + 1}, \frac{\alpha + 2p}{\alpha + p - 1}\right)$, 
	and 
	\begin{equation}\label{0918equ2}
		|\nabla u|^2 e^{-\gamma |x|^2} \leq  u^{\alpha + 2p} e^{-\gamma |x|^2} +  u^{\alpha - 2}|\nabla u|^4 e^{-\gamma |x|^2} + M_3 u^{\alpha + 2} e^{-\gamma |x|^2} ,
	\end{equation}
	with $(p_3, q_3, \sigma_3) = \left(-\frac{2p - 2}{\alpha}, 2 , \frac{2p - 2}{\alpha + p - 1}  \right)$. Combining \eqref{0908est}, \eqref{0918equ1} and \eqref{0918equ2}, we get 
	\begin{equation}
			\int_{\mathbb R^n}  u^{p + 1} e^{-\gamma |x|^2} + |\nabla u|^2 e^{-\gamma|x|^2} < \infty .
		\end{equation}

	\end{proof}

	Now we give the proof of Claim \ref{Delta} as follows.
	\begin{proof}[Proof of Claim \ref{Delta}]
		
	\begin{itemize}
		\item When  $1 < p \leq \frac{n + 2}{n}$, we know $\alpha = 1 - p$ and
		\begin{align*}
		\Delta &= \frac{4n}{n + 2} p^2 - \left( 1  - p\frac{3n}{n + 2}  \right)^2\\
		&= - \frac{5n^2 - 8n}{(n + 2)^2}p^2 + \frac{6n}{n + 2}p - 1.
	\end{align*}
	When $p = 1$,
	\begin{equation}
		\Delta = - \frac{5n^2 - 8n}{(n + 2)^2} + \frac{6n}{n + 2} - 1 = \frac{16n - 4}{(n + 2)^2} > 0.
	\end{equation}
	When $p = \frac{n + 2}{n}$,
	\begin{equation}
		\Delta =  \frac{8}{n} > 0.
	\end{equation}
	So we get that when $1 < p \leq \frac{n + 2}{n}$, it holds that $\Delta > 0$.

		\item When $\frac{n + 2}{n} < p < p_*$, we know $\alpha = -\frac{2p}{n + 2}$ and
		\begin{equation}
			\Delta = \frac{4n p}{n + 2}\left( 1 - \frac{n - 2}{n + 2}p\right) > 0.
		\end{equation}

	\end{itemize}
	
	\end{proof}
	Now we can follow the idea of Giga and Kohn in \cite{MR784476} to prove Theorem \ref{0928thm}.
	Define
	\begin{equation}
		\rho := e^{-\frac{1}{4}|x|^2},
	\end{equation}
	then we can rewrite \eqref{0910equ1} as
	\begin{equation}\label{0919equ1}
		\nabla \cdot (\rho \nabla u) - \beta \rho u + \rho u^p = 0.
	\end{equation}
	Using Lemma \ref{Thm1}, we can get the following several results which are important in our proof of Theorem  \ref{0928thm}.
	\begin{lemma}\label{0927lemma1}
		When $n \geq 2$, $\lambda = \frac{1}{2}, \beta = \frac{1}{p - 1}$ and $u \geq 0$ is a solution of \eqref{0910equ1} , we have
		\begin{equation}\label{0928equ1}
			\int  \rho |\nabla u|^2 + \beta \rho u^2 - \rho u^{p + 1} = 0
		\end{equation}
	\end{lemma}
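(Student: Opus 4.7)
The identity \eqref{0928equ1} is formally what one obtains by multiplying \eqref{0919equ1} by $u$ and integrating over $\mathbb{R}^n$: the term $\rho u^{p}\cdot u$ gives $\int\rho u^{p+1}$, the term $-\beta\rho u\cdot u$ gives $-\beta\int\rho u^2$, and the divergence term $u\nabla\cdot(\rho\nabla u)$ integrates by parts to $-\int\rho|\nabla u|^2$. The plan is therefore to carry out this test-function argument rigorously and to check that all boundary/error terms produced by a cutoff vanish in the limit, using the integrability supplied by Lemma \ref{Thm1}.

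Specifically, take the same cutoff $\eta$ used in the proof of Lemma \ref{Thm1}, with $\eta\equiv 1$ on $B_R$, $\eta\equiv 0$ outside $B_{2R}$, $|\nabla\eta|\lesssim R^{-1}$, and let $\theta$ be large. Multiplying \eqref{0919equ1} by $u\eta^{\theta}$ and integrating by parts once yields
\begin{equation*}
\int\bigl(\rho|\nabla u|^2+\beta\rho u^2-\rho u^{p+1}\bigr)\eta^{\theta}\,dx=-\int u\,\rho\,\nabla u\cdot\nabla(\eta^{\theta})\,dx.
\end{equation*}
Applying Lemma \ref{Thm1} with $\gamma=\tfrac{1}{4}$ gives $\int\rho|\nabla u|^2<\infty$ and $\int\rho u^{p+1}<\infty$; the elementary bound $u^2\lesssim u^{p+1}+1$ (valid because $p>1$) together with $\int\rho<\infty$ then yields $\int\rho u^2<\infty$. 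Hence by dominated convergence the left-hand side above converges as $R\to\infty$ to the full integral appearing in \eqref{0928equ1}.

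It remains to show that the right-hand side tends to zero. Since $\nabla(\eta^\theta)$ is supported in the annulus $\{R\le|x|\le 2R\}$ with $|\nabla(\eta^\theta)|\lesssim R^{-1}$, Cauchy--Schwarz gives
\begin{equation*}
\left|\int u\,\rho\,\nabla u\cdot\nabla(\eta^{\theta})\,dx\right|^{2}\lesssim \frac{1}{R^{2}}\int_{B_{2R}\setminus B_{R}}\rho u^{2}\,dx\cdot\int\rho|\nabla u|^{2}\,dx,
\end{equation*}
and the right-hand side vanishes as $R\to\infty$ since both integrals on the right are finite (and the first one, being the tail of a convergent integral, tends to $0$). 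This will yield \eqref{0928equ1}. The main obstacle in proving the lemma has already been overcome by Lemma \ref{Thm1}: without an a priori $L^\infty$ bound on $u$, the weighted integrability is exactly what is needed to legalize the Pohozaev-type multiplier computation, and once it is in hand the argument above is entirely routine.
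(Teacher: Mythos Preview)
Your proposal is correct and follows essentially the same approach as the paper: multiply \eqref{0919equ1} by a cutoff of $u$, integrate by parts, invoke Lemma~\ref{Thm1} (with $\gamma=\tfrac14$) to control $\int\rho|\nabla u|^2$, $\int\rho u^{p+1}$ and hence $\int\rho u^2$, and show the boundary term vanishes as $R\to\infty$. The paper uses $\eta^2$ rather than $\eta^\theta$ and bounds the error term via Young's inequality ($|u\,\nabla u\cdot\nabla\eta|\le\frac{C}{R}(u^2+|\nabla u|^2)$) instead of Cauchy--Schwarz, but these are cosmetic differences.
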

	
	\begin{proof}[Proof of Lemma \ref{0927lemma1}]
		Multiply \eqref{0919equ1} by $-u \eta^2$ and use integration by parts:
		\begin{equation}
			\begin{aligned}
				\int_{\mathbb R^n}  \rho |\nabla u|^2 \eta^2 + \beta \rho u^2 \eta^2 - \rho u^{p + 1} \eta^2 = - \int_{\mathbb R^n} 2 \rho u \eta \nabla u \cdot \nabla \eta.
			\end{aligned}
		\end{equation}
		Besides, by Lemma \ref{Thm1},
		\begin{equation}
			\left\vert \int_{\mathbb R^n} 2 \rho u \eta \nabla u \cdot \nabla \eta \right\vert \leq \int_{\mathbb R^n} \frac{C}{R} \rho u^2 + \frac{C}{R}\rho |\nabla u|^2 \leq \frac{ CM}{R}.
		\end{equation}
		So using dominated convergence theorem, we get when $R$ tends to $\infty$, the conclusion holds.
	\end{proof}

	\begin{lemma}\label{0927lemma2}
		When $n \geq 2$, $\lambda = \frac{1}{2}, \beta = \frac{1}{p - 1}$ and $u \geq 0$ is a solution of \eqref{0910equ1}, we have
		\begin{equation}\label{0928equ2}
			\int_{\mathbb R^n} |x|^2 |\nabla u|^2 \rho + \left[\left(\beta + \frac{1}{2}\right)|x|^2 - n\right]u^2 \rho - |x|^2 u^{p + 1}\rho = 0.
		\end{equation}
	\end{lemma}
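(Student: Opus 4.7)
The plan is to imitate the proof of Lemma \ref{0927lemma1} with the test function $-u\eta^2$ replaced by $-|x|^2 u \eta^2$, where $\eta$ is the same cutoff supported in $B_{2R}$ and equal to $1$ on $B_R$. Multiplying \eqref{0919equ1} by $-|x|^2 u \eta^2$ and integrating the divergence term by parts yields
\begin{equation*}
\int \rho\, \nabla u \cdot \nabla(|x|^2 u \eta^2) + \beta \int |x|^2 u^2 \rho \eta^2 - \int |x|^2 u^{p+1} \rho \eta^2 = 0.
\end{equation*}
Expanding $\nabla(|x|^2 u \eta^2)$ produces the desired $\int |x|^2 |\nabla u|^2 \rho \eta^2$, a cross term $\int 2\rho u \eta^2\, x \cdot \nabla u$, and a cutoff error $\int 2 |x|^2 \rho u \eta\, \nabla u \cdot \nabla \eta$.

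Next I would rewrite $2\rho u \eta^2 \, x \cdot \nabla u = \rho \eta^2\, x \cdot \nabla(u^2)$ and integrate by parts once more. Since $\nabla \rho = -\frac{1}{2} x \rho$, one computes $\nabla \cdot (\rho\, x) = (n - \frac{1}{2} |x|^2)\rho$, so this step converts the cross term into exactly the coefficient $\left[(\beta + \frac{1}{2})|x|^2 - n\right] u^2 \rho$ appearing in the statement, together with an additional cutoff error of the form $-2\int \rho \eta\, u^2 \, x \cdot \nabla \eta$.

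The final step is to send $R \to \infty$ and show that the two cutoff errors vanish. Because $|\nabla \eta| \lesssim R^{-1}$ and $\nabla \eta$ is supported in $B_{2R} \setminus B_R$, the error terms are bounded by
\begin{equation*}
\frac{C}{R} \int_{B_{2R} \setminus B_R} \rho \bigl(|x|^3 u |\nabla u| + |x| u^2\bigr),
\end{equation*}
and by AM--GM together with $|x|^k e^{-|x|^2/4} \lesssim e^{-\gamma |x|^2}$ for any fixed $\gamma \in (0, 1/4)$, they are dominated by $\frac{C}{R} \int e^{-\gamma|x|^2} (u^2 + |\nabla u|^2)$. Lemma \ref{Thm1} (using $u^2 \lesssim 1 + u^{p+1}$ since $p > 1$) furnishes a finite bound independent of $R$, so the errors are $O(R^{-1})$ and drop out in the limit; the identity then follows by dominated convergence on the main terms. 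The only technical point is this last absorption of the polynomial weights $|x|^k$ into a slightly stronger Gaussian, which is available precisely because Lemma \ref{Thm1} holds for every $\gamma > 0$.
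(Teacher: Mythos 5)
Your proof is correct and follows essentially the same route as the paper: multiply \eqref{0919equ1} by $-|x|^2 u\eta^2$, integrate the divergence term by parts, convert the cross term via $\nabla\cdot(\rho\, x) = \bigl(n - \tfrac12|x|^2\bigr)\rho$, and let $R\to\infty$ using Lemma \ref{Thm1}. In fact you are somewhat more careful than the paper, which omits the justification that the polynomial weights $|x|^k$ are absorbed into $e^{-\gamma|x|^2}$ with $\gamma < 1/4$ so that the cutoff errors vanish and the main terms converge.
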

	\begin{proof}[Proof of Lemma \ref{0927lemma2}]
		Multiply \eqref{0919equ1} by $-|x|^2 u \eta^2$ and use integration by parts:
		\begin{equation}
			\begin{aligned}
				&-\int |x|^2u \nabla \cdot (\rho \nabla u) \eta^2\\
				 =& \int |x|^2 |\nabla u|^2 \rho \eta^2 + (x\cdot \nabla u^2)\rho \eta^2 + 2|x|^2 u \rho \eta \nabla u \cdot \nabla \eta\\
				=& \int |x|^2 |\nabla u|^2 \rho \eta^2 - n u^2 \rho \eta^2 + \frac{1}{2}|x|^2 u^2 \rho \eta^2 - 2u^2 \rho \eta \nabla \eta \cdot x  + 2|x|^2 u \rho \eta \nabla u \cdot \nabla \eta.
			\end{aligned}
		\end{equation}
		Letting $R$ tends to $\infty$, the conclusion holds.
	\end{proof}

	\begin{lemma}\label{0927lemma3}
		When $n \geq 2$, $\lambda = \frac{1}{2}, \beta = \frac{1}{p - 1}$ and $u \geq 0$ is a solution of \eqref{0910equ1}, we have
		\begin{equation}\label{0928equ3}
			\int \left( \frac{|x|^2}{4} - \frac{n - 2}{2}\right)|\nabla u|^2 \rho + \left( \frac{\beta|x|^2}{4} - \frac{n \beta}{2}\right)u^2 \rho - \left(\frac{|x|^2}{2 p + 2} - \frac{n}{p + 1}\right)u^{p + 1}\rho = 0.
		\end{equation}
	\end{lemma}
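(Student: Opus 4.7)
\medskip

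\noindent\textbf{Proof proposal for Lemma \ref{0927lemma3}.}

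The target identity is a Pohozaev-type identity with the Gaussian weight $\rho=e^{-|x|^2/4}$, so the natural multiplier is $(x\cdot\nabla u)\eta^2$, where $\eta$ is the same cut-off as before ($\eta\equiv 1$ on $B_R$, $\eta\equiv 0$ outside $B_{2R}$). Starting from the divergence-form equation \eqref{0919equ1}, I would write
\begin{equation*}
0=\int\bigl[\nabla\cdot(\rho\nabla u)-\beta\rho u+\rho u^{p}\bigr]\,(x\cdot\nabla u)\,\eta^{2}
\end{equation*}
and integrate each of the three terms by parts, repeatedly using the key relation $\nabla\rho=-\tfrac{x}{2}\rho$.

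For the principal (Laplacian) term I would use
$u_{i}x_{j}u_{ij}=\tfrac12 x_{j}\partial_{j}|\nabla u|^{2}$ after transferring the derivative off $\rho\nabla u$; one integration by parts produces a $-\int\rho|\nabla u|^{2}$ and a term $\tfrac12\int\partial_{j}(\rho x_{j})|\nabla u|^{2}$ whose evaluation via $\nabla\cdot(\rho x)=(n-\tfrac{|x|^{2}}{2})\rho$ yields exactly the factor $\bigl(\tfrac{n-2}{2}-\tfrac{|x|^{2}}{4}\bigr)\rho|\nabla u|^{2}$. For the other two terms I would rewrite $u(x\cdot\nabla u)=\tfrac12 x\cdot\nabla u^{2}$ and $u^{p}(x\cdot\nabla u)=\tfrac{1}{p+1}x\cdot\nabla u^{p+1}$, integrate by parts once, and use $\nabla\cdot(\rho x)=(n-\tfrac{|x|^{2}}{2})\rho$ again. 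These produce $\bigl(\tfrac{n\beta}{2}-\tfrac{\beta|x|^{2}}{4}\bigr)\rho u^{2}$ and $\bigl(\tfrac{|x|^{2}}{2(p+1)}-\tfrac{n}{p+1}\bigr)\rho u^{p+1}$ respectively. Summing and multiplying by $-1$ gives the claimed identity in the formal (cut-off free) calculation.

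The remaining work is to justify passing $R\to\infty$ in the cut-off terms. Each such boundary term contains a factor $\nabla\eta^{2}=2\eta\nabla\eta=O(R^{-1})$ supported in the annulus $B_{2R}\setminus B_{R}$, multiplying combinations such as $\rho(x\cdot\nabla u)u_{i}$, $\rho x\cdot\nabla\eta\,u^{2}$ or $\rho x\cdot\nabla\eta\,u^{p+1}$. To dominate them uniformly in $R$ I would pick any $\gamma\in(0,\tfrac14)$ and bound $|x|^{k}\rho\le C_{k,\gamma}e^{-\gamma|x|^{2}}$, then invoke Lemma \ref{Thm1} (together with Young's inequality to absorb $u^{2}$ into $u^{p+1}+1$) to conclude that $\int|x|^{k}\bigl(|\nabla u|^{2}+u^{2}+u^{p+1}\bigr)\rho<\infty$ for every $k\ge 0$. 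This makes every cut-off error an $o(1)$ quantity as $R\to\infty$ by dominated convergence, exactly as in Lemmas \ref{0927lemma1} and \ref{0927lemma2}.

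The only step that requires genuine care is the first integration by parts, where one has to be tidy about the term $\tfrac12 x\cdot\nabla|\nabla u|^{2}$ to ensure the $\tfrac{|x|^{2}}{4}$ coefficient in front of $|\nabla u|^{2}\rho$ comes out with the correct sign and combines properly with $-\int\rho|\nabla u|^{2}$. Everything else is a bookkeeping exercise of combining $\nabla\cdot(\rho x)=(n-|x|^{2}/2)\rho$ with the three ``scalar-multiplier'' computations, followed by the $R\to\infty$ limit which is handled by the integrability provided by Lemma \ref{Thm1}.
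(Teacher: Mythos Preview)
Your proposal is correct and follows essentially the same route as the paper: multiply \eqref{0919equ1} by $(x\cdot\nabla u)\eta^{2}$, integrate by parts using $\nabla\cdot(\rho x)=(n-\tfrac{|x|^{2}}{2})\rho$ on each of the three terms, and pass to the limit $R\to\infty$ via the integrability supplied by Lemma~\ref{Thm1}. Your justification of the limit (choosing $\gamma<\tfrac14$ to absorb the polynomial weights) is in fact more explicit than the paper's, which simply writes ``Letting $R$ tends to $\infty$, the conclusion holds.''
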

	\begin{proof}[Proof of Lemma \ref{0927lemma3}]
		Multiply \eqref{0919equ1} by $-(x \cdot \nabla u) \eta^2$ and use integration by parts:
		\begin{equation}
			\begin{aligned}
				\int (x \cdot \nabla u)\rho \eta^2(\beta u - u^p) &= \int \rho \eta^2 x \cdot \nabla\left( \frac{\beta u^2}{2} - \frac{u^{p + 1}}{p + 1}\right)\\
				&= \int \left( \frac{|x|^2}{2}\eta^2 - n \eta^2 - 2 \eta \nabla \eta \cdot x \right)\rho \left( \frac{\beta u^2}{2} - \frac{u^{p + 1}}{p + 1}\right).\\
			\end{aligned}
		\end{equation}
		and 
		\begin{equation}
			\begin{aligned}
				-\int \eta^2 (x \cdot \nabla u)\nabla \cdot (\rho \nabla u) &= \int \eta^2 (\rho \nabla u)\cdot \nabla( x \cdot \nabla u) + 2\rho\eta (\nabla \eta \cdot \nabla u) \cdot ( x \cdot \nabla u)   \\
				&= \int \rho |\nabla u|^2 \eta^2 + \frac{1}{2}\rho \eta^2  x \cdot \nabla(|\nabla u|^2) + 2\rho\eta (\nabla \eta \cdot \nabla u) \cdot ( x \cdot \nabla u)   \\
				&= \int \rho |\nabla u|^2 \eta^2 - \frac{1}{2}|\nabla u|^2\nabla\cdot( \rho \eta^2  x) + 2\rho\eta (\nabla \eta \cdot \nabla u) \cdot ( x \cdot \nabla u)  . \\
			\end{aligned}
		\end{equation}
		Letting $R$ tends to $\infty$, the conclusion holds.
	\end{proof}
	Now we can give the proof of Theorem \ref{0928thm}:
	\begin{proof}[Proof of Theorem \ref{0928thm}]
		If $\frac{\partial u}{\partial x_2}\equiv 0$, then $n = 1$ is a special case for $n = 2$. So we only need to consider the case $n \geq 2$.
		By taking the linear combination of $\frac{n}{p + 1} \cdot \eqref{0928equ1} - \frac{1}{2(p + 1)} \cdot \eqref{0928equ2} + \eqref{0928equ3}$, we get
		\begin{equation}
			\frac{n + 2 - (n - 2)p}{2p + 2}  \int_{\mathbb R^n} |\nabla u|^2 \rho + \frac{p - 1}{4(p + 1)} \int_{\mathbb R^n} |x|^2 |\nabla u|^2 \rho = 0.
		\end{equation}
		So for any $1 < p < p_*$, we get $|\nabla u| \equiv 0$.
	\end{proof}

	\textbf{Data Availability }Data sharing not applicable to this article as no datasets were generated or analysed during the current study.

	\section*{Declarations}
	
	\textbf{Conflict of interest} The corresponding author states that there is no conflict of
interest. The author has no relevant financial or non-financial interests to disclose.
	
\bibliographystyle{plain} 
\bibliography{possibility.bib}

\begin{thebibliography}{10}

\bibitem{MR3959864}
Marie-Fran\c~coise Bidaut-V\'eron, Marta Garc\'ia-Huidobro, and Laurent
  V\'eron.
\newblock Estimates of solutions of elliptic equations with a source reaction
  term involving the product of the function and its gradient.
\newblock {\em Duke Math. J.}, 168(8):1487--1537, 2019.

\bibitem{MR4150912}
Marie-Fran\c~coise Bidaut-V\'eron, Marta Garcia-Huidobro, and Laurent V\'eron.
\newblock A priori estimates for elliptic equations with reaction terms
  involving the function and its gradient.
\newblock {\em Math. Ann.}, 378(1-2):13--56, 2020.

\bibitem{MR4043662}
Marie-Fran\c~coise Bidaut-V\'eron, Marta Garcia-Huidobro, and Laurent V\'eron.
\newblock Radial solutions of scaling invariant nonlinear elliptic equations
  with mixed reaction terms.
\newblock {\em Discrete Contin. Dyn. Syst.}, 40(2):933--982, 2020.

\bibitem{byeon2024compactnessmonotonicitynonsmoothcritical}
Jaeyoung Byeon, Norihisa Ikoma, Andrea Malchiodi, and Luciano Mari.
\newblock Compactness via monotonicity in nonsmooth critical point theory, with
  application to born-infeld type equations, 2024,
  https://arxiv.org/abs/2406.14853.

\bibitem{MR615628}
B.~Gidas and J.~Spruck.
\newblock Global and local behavior of positive solutions of nonlinear elliptic
  equations.
\newblock {\em Comm. Pure Appl. Math.}, 34(4):525--598, 1981.

\bibitem{MR784476}
Yoshikazu Giga and Robert~V. Kohn.
\newblock Asymptotically self-similar blow-up of semilinear heat equations.
\newblock {\em Comm. Pure Appl. Math.}, 38(3):297--319, 1985.

\bibitem{MR876989}
Yoshikazu Giga and Robert~V. Kohn.
\newblock Characterizing blowup using similarity variables.
\newblock {\em Indiana Univ. Math. J.}, 36(1):1--40, 1987.

\bibitem{MR4753063}
Daowen Lin and Xi-Nan Ma.
\newblock Liouville type theorem for a class quasilinear {$p$}-{L}aplace type
  equation on the sphere.
\newblock {\em Math. Ann.}, 389(3):2273--2287, 2024.

\bibitem{MR4519639}
Daowen Lin and Qianzhong Ou.
\newblock Liouville type theorems for positive harmonic functions on the unit
  ball with a nonlinear boundary condition.
\newblock {\em Calc. Var. Partial Differential Equations}, 62(1):Paper No. 34,
  13, 2023.

\bibitem{ma2023liouvilletheoremellipticequations}
Xi-Nan Ma and Wangzhe Wu.
\newblock Liouville theorem for elliptic equations involving the product of the
  function and its gradient in ${R}^n$, 2023, https://arxiv.org/abs/2311.04652.

\bibitem{Ma2024liouvilletheoremellipticequations}
Xi-Nan Ma, Wangzhe Wu, and Qiqi Zhang.
\newblock Liouville theorem for elliptic equations involving the sum of the
  function and its gradient in ${ R}^n$, 2023,
  https://arxiv.org/abs/2311.04641.

\bibitem{MR4255053}
Pavol Quittner.
\newblock Optimal {L}iouville theorems for superlinear parabolic problems.
\newblock {\em Duke Math. J.}, 170(6):1113--1136, 2021.

\bibitem{SOUPLET20091409}
Philippe Souplet.
\newblock The proof of the lane–emden conjecture in four space dimensions.
\newblock {\em Advances in Mathematics}, 221(5):1409--1427, 2009.

\bibitem{Keleiwang}
Kelei Wang, Juncheng Wei, and Ke~Wu.
\newblock F-stability, entropy and energy gap for supercritical fujita
  equation.
\newblock {\em Journal für die reine und angewandte Mathematik (Crelles
  Journal)}, 2025.

\bibitem{wu2023liouvilletheoremquasilinearelliptic}
Wangzhe Wu and Qiqi Zhang.
\newblock Liouville theorem for quasilinear elliptic equations in ${ R}^n$.
\newblock {\em Front. Math}, 2025.

\end{thebibliography}

\end{document}